\newtheorem{theorem}{Theorem}
\newtheorem{proposition}{Proposition}
\newtheorem{assumption}{Assumption}
\newtheorem{lemma}{Lemma}
\newtheorem{definition}{Definition}
\newcommand{\R}{{\mathbb R}}
\newcommand{\N}{{\mathbb N}}
\newcommand{\Ltwo}{{\mathbf L^2}}
\title{
Feedback design of spatially-distributed filters with tunable resolution
}
\author{Alessio Franci
\thanks{This work was supported by UNAM-DGAPA-PAPIIT grant IN102420 and by CONACyT grant A1-S-10610.}
\thanks{Alessio Franci is with the Department of Mathematics, National Autonomous University of Mexico, Ciudad Universitaria, 04510, Mexico City, Mexico
        {\tt\small afranci@ciencias.unam.mx}}%
}
\begin{document}

\maketitle

\begin{abstract}
	We derive gain-tuning rules for the positive and negative spatial-feedback loops of a spatially-distributed filter to change the resolution of its spatial band-pass characteristic accordingly to a wavelet zoom, while preserving temporal stability.
	The filter design is inspired by the canonical spatial feedback structure of the primary visual cortex and is motivated by understanding attentional control of visual resolution. Besides biology, our control-theoretical design strategy is relevant for the development of neuromorphic multiresolution distributed sensors through the feedback interconnection of elementary spatial transfer functions and gain tuning.
	
\end{abstract}

\thispagestyle{empty}
\pagestyle{empty}

\section{Introduction}

Our visual system is able to let us ``see the forest and the trees''~\cite{Diamond2018}, meaning that the fine local details and the coarse global structure of a scene are perceived simultaneously. In other words, our visual systems processes the visual world simultaneously at multiple scales. Psychophysical~\cite{anton2013attentional} and electrophysiological~\cite{bredfeldt2002dynamics} evidences suggest that the ability of perceiving the visual world in a multi-scale fashion might rely on a continuous adjustment of the spatial resolution at which incoming visual stimuli are filtered by the first layers of the visual system. Attention is key in driving modulation of visual resolution~\cite{shelchkova2020modulations}. The same location in the visual field can be processed at high or low resolution depending on whether the attentional focus is directed toward it or not. The transition between high-resolution and low-resolution processing is seemingly continuous, both in time and in space.

Recently~\cite{franci2021cosyne}, we suggested a formal analogy between attention-driven visual resolution changes and a {\it wavelet zoom}, that is, the progressive zooming-in into the structure of a signal through a continuous wavelet transform at increasingly finer scales~\cite{mallat1999wavelet}. We showed numerically in a linear neural field model that the canonical local-excitation/lateral-inhibition feedback structure of the primary visual cortex~\cite{Wilson1973} can focus (i.e., increase the resolution of) the feedforward visual kernel accordingly to a wavelet zoom. Crucially, our model does not require any unrealistic finely-tuned, space-localized scaling of synaptic connectivity, as a feedforward model would require. A space-homogeneous upscale of excitatory and inhibitory feedback connections is sufficient to realize the wavelet zoom, robustly to parameter uncertainties and spatial heterogeneities.

In this paper, we illustrate the theory and key ideas underlying our numerical results. We prove that in a spatially-distributed feedback filter, with fixed feedforward and feedback spatial transfer functions, a balanced (i.e., proportional) scaling of the positive and negative spatial-feedback gains realizes the whole space-frequency dictionary generated by the feedforward kernel while preserving the temporal stability of the filter. In other words, we show that it is possible to change the resolution of a spatially-distributed filter accordingly to a wavelet zoom by gain-tuning and, crucially, without changing its feedforward and feedback convolution kernels, robustly to parameter uncertainties and heterogeneities.

Our work is similar in spirit to classical works on the analog realization of wavelet transforms (see, e.g.,~\cite{haddad2005log}) but with some key differences. First, time is replaced by space, that is, capacitors are replaced by spatially-distributed, hard-wired connections. Second, we do not aim at realizing a spatial-filter bank, but rather at designing a spatially-distributed system whose transfer function can continuously be modulated in resolution accordingly to a wavelet transform and through a few tuning parameters. Third, our approach is neuromorphic, that is, we do not aim at fitting existing computational wavelets. Rather, our wavelets are those that arise from the feedforward and feedback structure of the first layers of the visual system. The result is a design methodology that relies on the feedback interconnection of elementary spatial transfer functions, easily implementable in practice without the need of any fine tuning. In particular, our methodology is compatible with the sloppiness of analog hardware because the lack of any fine tuning makes it naturally robust to parameter uncertainties and other practical approximations. It is a new candidate for the design of neuromorphic multiresolution analog visual sensors inspired by attention mechanisms in the primary visual cortex~\cite{indiveri2019importance}.

The paper is organized as follows. Section~\ref{SEC: notation} introduces the needed notations and definitions. In Section~\ref{SEC: V1 feedback filter}, we derive a spatial-feedback model of the primary visual cortex and we construct its closed-loop spatial transfer function. In Section~\ref{SEC: loop shaping real}, we use frequency-domain methods to derive the gain-tuning rules to robustly realize a wavelet zoom in the closed-loop transfer function and illustrate our theoretical results via a numerical example. A discussion and future directions are presented in Section~\ref{SEC: discussion}.

\section{Notation and definition}
\label{SEC: notation}

$\R$ denotes the set of real numbers and $\R_+$ denotes the set of positive real numbers. $\Ltwo(\R)$ denotes the space of square-integrable (finite energy) functions, i.e., $f\in\Ltwo(\R)$ if and only if
$
\int_\R|f(x)|^2dx<\infty\,.
$
$\Ltwo(\R)$ is equipped with the norm
$
\|f\|=\left(\int_\R|f(x)|^2dx\right)^{1/2}
$.
Given $f,g\in\Ltwo$, their {\it convolution} is defined
as
\[
f\star g(x)=\int_\R f(u) g(x-u)du
\]

Given $f\in\Ltwo$, its {\it Fourier transform} is defined as
\[\mathcal F(f)(\lambda)=\int_\R f(x)e^{-i\lambda x}dx=:\hat f(\lambda)\,.\]
The {\it inverse Fourier transform is defined as} 
\[\mathcal F^{-1}(\hat f)(x)=\frac{1}{2\pi}\int_\R \hat f(\lambda) e^{i\lambda x}d\lambda=:f(x)\]
Given $f:\R^2\to\R$ such that $f(\cdot,t)\in\Ltwo(\R)$ for all $t$, we also define
$$\hat f(\lambda,t)=\int_\R e^{-i\lambda x} f(x,t) dx\,.$$

The following definitions are borrowed from~\cite{mallat1999wavelet}.
\begin{definition}
	A function $f:\R\to\R$ is said to have a \emph{fast decay} if for any $m\in\N$ there exists $C_m>0$ such that
	$$\forall x\in\R,\quad |f(x)|\leq \frac{C_m}{1+|x|^m}. $$
\end{definition}
\begin{definition}[Mallat]\label{DEF: wavelet}
	A \emph{wavelet} is a function $\psi\in\Ltwo(\R)$ such that it has zero average, i.e., $\int_\R \psi(t)dt=0$ and is normalized, i.e., $\|\psi\|=1$.
\end{definition}
\noindent Here, we will only consider real wavelets $\psi:\R\to\R$.

\begin{definition}
	The \emph{dictionary of space-frequency atoms} generated by a wavelet $\psi$ is the set
	\[
	\mathcal D_{ff}=\left\{\psi_{u,s}(x)=\frac{1}{\sqrt{s}}\psi\left(\frac{x-u}{s}\right)\right\}_{u\in\R,s\in\R_+}\,.
	\]
\end{definition}

\begin{definition}
	Given $f\in\Ltwo$ and a real wavelet $\psi$, the \emph{wavelet transform} of $f$ is defined as
	\[
	Wf(u,s)=\int_\R f(x)\psi_{u,s}(x)dx=f\star\bar \psi_{s}(u)\,,
	\]
	where $\bar\psi_s(x)=\psi_{0,s}(-x)$.
\end{definition}
Because by definition $\mathcal F(\bar\psi_s)(0)=0$, a wavelet transform performs a multi-scale band-pass filtering.

\section{V1 as an excitatory/inhibitory feedback spatial filter}
\label{SEC: V1 feedback filter}

\subsection{A linear neural field description of V1}

The primary visual cortex (V1) codes incoming visual stimuli into the electrical activity of its millions of neurons. Neural fields~\cite{coombes2014neural} constitute a useful spatially-distributed mathematical description of the rich spatiotemporal dynamics of the primary visual cortex. Let $a(x,t)\in\R$ represent the electrical activity of an infinitesimal patch of the visual cortex located at position $x$ and at time $t$. Then
\begin{equation}\label{EQ: V1 neural field generic}
a_t(x,t)=-\gamma a(x,t)+\int_{\Omega} w(x,y) \sigma(a(y,t)) dy + \tilde h(x,t),
\end{equation}
where $a_t=\frac{\partial a}{\partial t}$, $\gamma\geq 0$ is the damping coefficient of V1 electrical activity (biologically related to V1 neurons' input conductance), $\Omega$ is the spatial domain of the visual cortex, $w(x,y)$ defines how and how strongly neurons at position $x$ are affected by neurons at position $y$, $\sigma:\R\to\R$ is a monotone increasing function modeling neuronal synaptic interactions and $\tilde h(x,t)$ represent exogenous inputs at location $x$ and time $t$. In this paper we are interested in the (linear) spatial-filtering properties of a the primary visual cortex. We thus assume $\sigma(a)=a$. We will come back to the role of nonlinearities in neuronal interactions in Section~\ref{SEC: discussion}.

A useful mathematical simplification is to assume that the spatial domain $\Omega=\R$. Under this simplification, another natural, useful and commonly made assumption is that the spatio-temporal dynamics are homogeneous (i.e., spatially-invariant), that is, $w(x,y)=W(x-y)$. Then, the spatial part of model~\eqref{EQ: V1 neural field generic} can be rewritten in spatial convolution form as
\begin{equation}\label{EQ: V1 neural field generic conv}
a_t=-\gamma a+W\star a+\tilde h.
\end{equation}
The input $\tilde h(x,t)$ to V1 is obtained by filtering the actual visual input to the animal $h(x,t)$ through the visual layers upstream V1. We let $W_{ff}$ the convolution kernel associated to these upstream neural layers, i.e., $\tilde h(x,t)=(W_{ff}\star \tilde h(\cdot,t))(x)$. For obvious reasons, we call $W$ the {\it feedback} kernel and $W_{ff}$ the {\it feed-forward} kernel associated to V1.

\begin{assumption}\label{AS: W in L2}
$W,W_{ff}\in\Ltwo(\R)$.
\end{assumption}
\noindent Under Assumption~\ref{AS: W in L2}, model~\eqref{EQ: V1 neural field generic conv} defines a linear space-invariant integro-differential operator on $\Ltwo(\R)$, which can be diagonalized in the Fourier basis by taking the spatial Fourier transform of both sides of the equation
\begin{equation}\label{EQ: V1 neural field generic fourier}
\hat a_t(\lambda,t)=-\gamma \hat a(\lambda,t)+\hat W(\lambda)\, \hat a(\lambda,t)+\hat W_{ff}(\lambda)\, \hat h(\lambda,t).
\end{equation}

\subsection{Excitatory and inhibitory feedback kernels}

Neuronal interactions inside V1 are both excitatory and inhibitory. It follows that $W(x-y)$ can both be positive or negative depending on whether a majority of neural projections between neurons at relative position $x-y$ are excitatory or inhibitory. Both types of projections decay exponentially in space, i.e., given a pair of neurons at distance $r$, the probabilities $p_{conn,E}(r)$ and $p_{conn,I}(r)$ of an excitatory or inhibitory connection between them are given by~\cite{boucsein2011beyond}
\begin{equation}\label{EQ: exp connection probability}
p_{conn,E}(r)=\frac{1}{2r_E}e^{-\frac{r}{r_E}},\quad p_{conn,I}(r)=\frac{1}{2r_I}e^{-\frac{r}{r_I}}\,,
\end{equation}
where $r_E>0$ and $r_I>0$ are the characteristic spatial scales of recurrent (feedback) excitatory and inhibitory interactions.

The feedback kernel $W$ can thus naturally be modeled as $W=K_EW_E-K_IW_I$,
where $K_E\geq0$ is the excitatory connection gain, $K_I\geq 0$ is the inhibitory connection gain,
\begin{equation}\label{EQ: exp connection bound exc}
|W_E(x-y)|\leq \frac{1}{2r_E}e^{-\frac{|x-y|}{r_E}}
\end{equation}
and
\begin{equation}\label{EQ: exp connection bound inh}
|W_I(x-y)|\leq \frac{1}{2r_I}e^{-\frac{|x-y|}{r_I}}
\end{equation}
Exponential decay of recurrent excitatory and inhibitory connections means that the kernels $W_E$ and $W_I$ are localized in space.

\subsection{The feedforward kernel}

Feedforward connections from upstream visual system to V1 (i.e., from the retina and through the visual thalamus) are also both excitatory and inhibitory. Feedforward projections also follow an exponential probability distribution~\eqref{EQ: exp connection probability} as a function of the distance between the projecting and receiving neuron. We thus assume
\begin{equation}\label{EQ: ff spatial inv}
 |W_{ff}(x-y)|\leq \frac{1}{2r_{ff}}e^{-\frac{|x-y|}{r_{ff}}}\,,
\end{equation}
where $r_{ff}>0$ is the characteristic spatial scale of feedforward connections. Furthermore, because visual neurons are only sensitive to spatial contrast variations, i.e., spatially uniform visual stimuli do not elicit a neural response, the feedforward visual kernel has zero average. For mathematical convenience and without loss of generality, we assume that $W_{ff}$ has been $\Ltwo$-normalized, which leads to
\begin{equation}\label{EQ: ff zero average + normalized}
\hat W_{ff}(0)=0,\quad \|W_{ff}\|=1´\,.
\end{equation}

\subsection{The spatial transfer function of V1}

Given a static visual stimulus $h(x,t)\equiv l(x)$, $l\in\Ltwo(\R)$, the equilibrium solution $a^*(x)$ of model~\eqref{EQ: V1 neural field generic conv} can easily be found in its frequency domain representation~\eqref{EQ: V1 neural field generic fourier} and reads
\begin{equation}\label{EQ: gen equilibrium solution}
\hat a^*(\lambda)=\frac{\hat W_{ff}(\lambda)}{\gamma-K_E\hat W_E(\lambda)+K_I\hat W_I(\lambda)}\hat l(\lambda)\,.
\end{equation}
This solution is stable if and only if all Fourier modes of~\eqref{EQ: V1 neural field generic fourier} are stable~\cite{Bamieh2002}, that is, if and only if
\begin{equation}\label{EQ: gen equilibrium solution stability}
-\gamma+\hat W(\lambda)<0,\quad \forall \lambda\in\R\,.
\end{equation}
Assuming the stability condition~\eqref{EQ: gen equilibrium solution stability} is satisfied, equation~\eqref{EQ: gen equilibrium solution} defines a spatial transfer function
\begin{equation}\label{EQ: neural field TF}
\hat H_{\gamma,K_E,K_I}(\lambda)=\frac{\hat W_{ff}(\lambda)}{\gamma-K_E\hat W_E(\lambda)+K_I\hat W_I(\lambda)}
\end{equation}
that characterizes the steady-state spatial filtering properties of our model V1. Figure~\ref{FIG: block diagram} show a block-diagram realization of $\hat H$. It highlights the (spatial) feedback nature of horizontal connections. Horizontal excitatory connections provides (spatial) posititve feedback. Horizontal inhibitory connections provide (spatial) negative feedback. The role of these feedback loops is to shape the kernel through which the neuronal layer filters the incoming signal $l$. For $K_E=K_I=0$, the open-loop layer's kernel is the feed-forward kernel $W_{ff}$. For non-zero feedback gains, the closed-loop kernel is reshaped by spatial feedback. Observe that the only tunable parameters in~\eqref{EQ: neural field TF} are the feedback gains $K_E$ and $K_I$, and the damping $\gamma$. The feed-forward $W_{ff}$ and feedback $W_E,W_I$ kernels are fixed.

\begin{figure}
\centering
\includegraphics[width=0.3\textwidth]{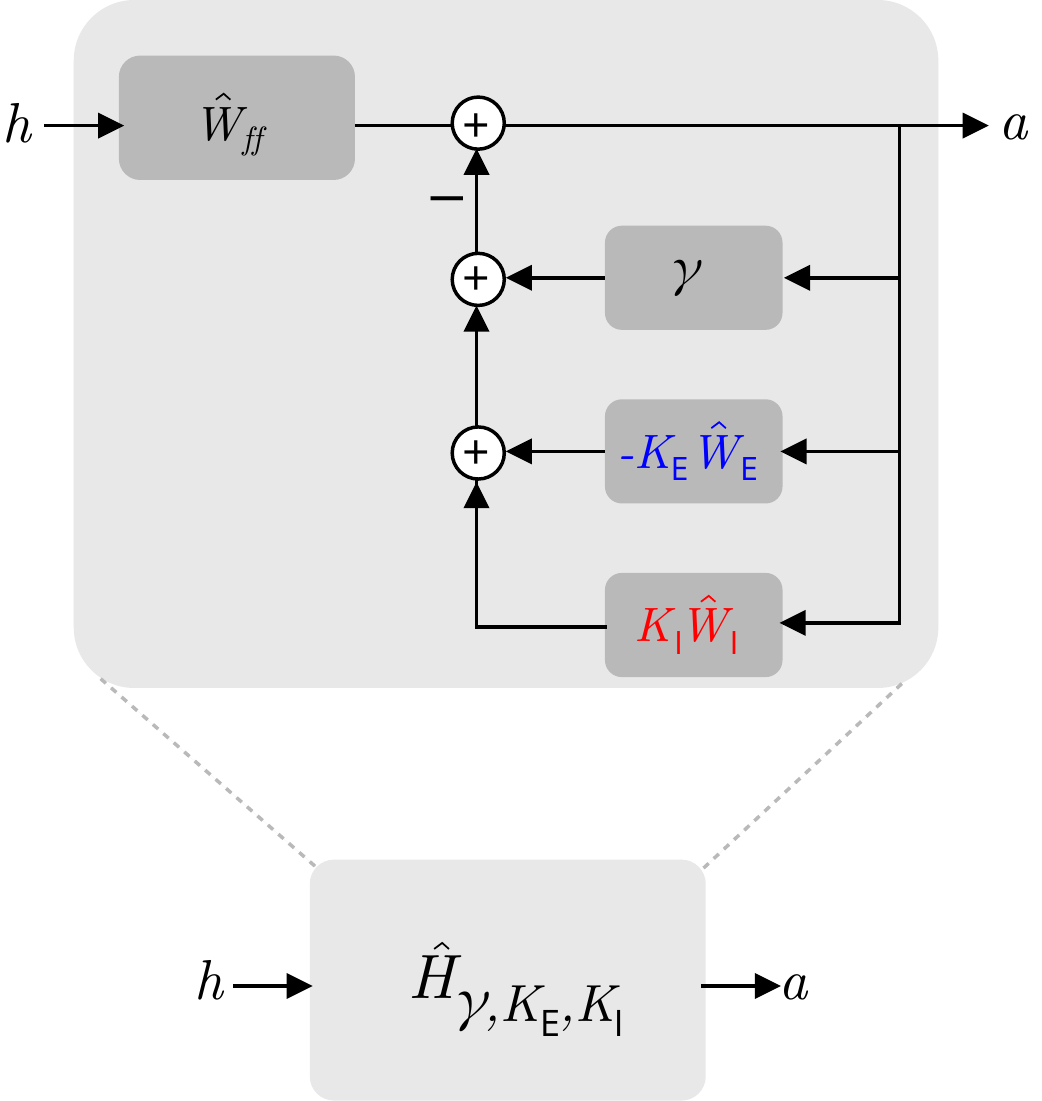}
\caption{Block diagram of the closed-loop spatial transfer function $\hat H_{\gamma,K_E,K_I}(\lambda)$.}\label{FIG: block diagram}
\end{figure}

\section{Feedback realization of a wavelet zoom}
\label{SEC: loop shaping real}

In this section we use frequency-domain close-loop analysis to illustrate how to realize a whole space-frequency dictionary through the modulation of the damping coefficient, and the excitatory and inhibitory feedback gains in the closed-loop transfer function~\eqref{EQ: neural field TF}. In our construction, the feedfoward kernel $W_{ff}$ defines the mother wavelet, which generate the space-frequency atom dictionary.
We start by illustrating these results in the case in which feedback and feedforward interactions are isotropic, that is, they solely depend on the distance between two neurons, i.e, all kernels are even functions of $x-y$. Generalizations are discussed in Section~\ref{SEC: discussion}.

\subsection{An exponentially-decaying isotropic feedforward kernel defines the mother wavelet}

A natural choice to represent the feedforward visual kernel while respecting the exponential decay of both excitatory and inhibitory feedforward connections~\eqref{EQ: exp connection probability} is
\begin{equation}\label{EQ: ff kernel shape}
W_{ff}(x-y)=\frac{1}{N(a,b)}\left(\frac{1}{2a}e^{-\frac{|x-y|}{a}}-\frac{1}{2b}e^{-\frac{|x-y|}{b}}\right)
\end{equation}
where $0<a<b$ and the $\Ltwo$-normalization factor $N (a, b) = \sqrt{\frac{(b - a)^2}{64ab(a + b)}} $. As illustrated in Figure~\ref{FIG: mother wavelet}, the feedforward kernel is made of a narrower excitatory part, with spatial scale $a$, and a broader inhibitory part, with spatial scale $b$.
This choice can be seen as an exponential approximation of center-surround receptive fields~\cite{alonso2009receptive}. The advantage of using an exponential approximation is that it leads to the rational transfer function
\begin{equation}\label{EQ: ff kernel fourier}
\hat W_{ff}(\lambda)=\frac{1}{N(a,b)}\frac{(b^2-a^2)\lambda^2}{(1+a^2\lambda^2)(1+b^2\lambda^2)}
\end{equation}
which is amenable to closed-loop algebraic manipulation, much in the same way as temporal transfer function design.

\begin{figure}
	\centering
	\includegraphics[width=0.3\textwidth]{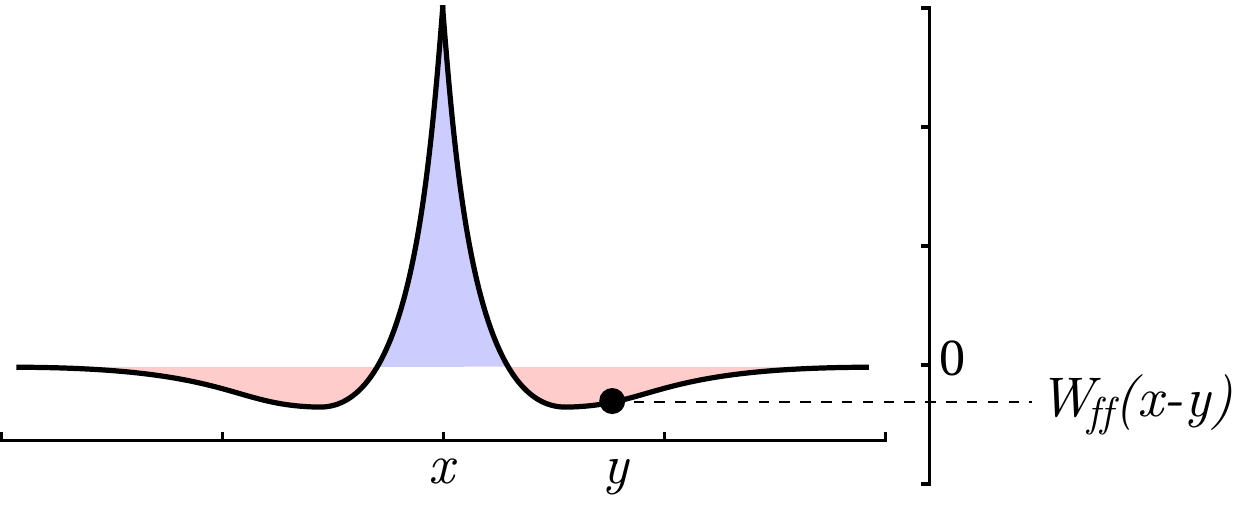}
	\caption{Qualitative shape of the feedforward kernel $W_{ff}(x-y)$. Red-shaded regions indicate inhibitory interactions. Blue-shaded regions indicate excitatory interactions.}\label{FIG: mother wavelet}
\end{figure}

The following proposition shows that $W_{ff}$ is a wavelet and that, moreover, it can be written as the second derivative of a fast-decaying function with non-zero average.
\begin{proposition}\label{PROP: Wff is wavelet}
	$W_{ff}$ as defined in~\eqref{EQ: ff kernel shape} is a wavelet with fast decay and, moreover, there exist a function with fast decay $\theta$, such that $\int_\R\theta(x)dx\neq 0$ and $W_{ff}=\theta''$.
\end{proposition}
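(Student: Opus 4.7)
The plan is to verify the three claims (wavelet property, fast decay, second-derivative representation) in order, with the central technical step being the construction of $\theta$ via Fourier-domain inversion.

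First, I would verify that $W_{ff}$ is a wavelet by checking the two conditions of Definition~\ref{DEF: wavelet}. The zero-average condition is immediate: since $\int_\R \frac{1}{2c}e^{-|x|/c}dx=1$ for any $c>0$, the two exponential contributions in~\eqref{EQ: ff kernel shape} cancel, giving $\int_\R W_{ff}=0$ (equivalently, $\hat W_{ff}(0)=0$, which is already visible in~\eqref{EQ: ff kernel fourier}). For the normalization $\|W_{ff}\|=1$, I would expand $\|W_{ff}\|^2$ into three integrals: $\int e^{-2|x|/a}/(4a^2)\,dx=1/(4a)$, the symmetric term $1/(4b)$, and the cross term $\int e^{-|x|(1/a+1/b)}/(4ab)\,dx = 1/(2(a+b))$. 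Collecting these and dividing by $N(a,b)^2$ yields a purely algebraic identity that fixes $N(a,b)$ uniquely.

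Second, fast decay of $W_{ff}$ is immediate. Each exponential $e^{-|x|/c}$ dominates every polynomial at infinity, so for any $m\in\N$ the function $(1+|x|^m)e^{-|x|/c}$ is bounded on $\R$; choosing the maximum of the two resulting constants and summing gives the desired $C_m$.

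Third, for the representation $W_{ff}=\theta''$, I would work in Fourier. Formally, if $\theta''=W_{ff}$ then $\hat\theta(\lambda)=-\hat W_{ff}(\lambda)/\lambda^2$, and the key point is that the $\lambda^2$ factor in the numerator of~\eqref{EQ: ff kernel fourier} cancels exactly, producing the rational function
\begin{equation*}
\hat\theta(\lambda)=\frac{1}{N(a,b)}\,\frac{a^2-b^2}{(1+a^2\lambda^2)(1+b^2\lambda^2)}.
\end{equation*}
A partial-fraction decomposition, combined with the standard pair $\mathcal F^{-1}\bigl(\tfrac{2c}{1+c^2\lambda^2}\bigr)(x)=e^{-|x|/c}$, yields the candidate
\begin{equation*}
\theta(x)=\frac{1}{2N(a,b)}\bigl(a\,e^{-|x|/a}-b\,e^{-|x|/b}\bigr).
\end{equation*}

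Fourth, I would confirm the three required properties of $\theta$. Fast decay of $\theta$ follows by the same argument as for $W_{ff}$. The integral $\int_\R\theta(x)dx=\frac{1}{2N(a,b)}(2a^2-2b^2)=\frac{a^2-b^2}{N(a,b)}$ is nonzero since $a\neq b$. Finally, to verify $\theta''=W_{ff}$ rigorously rather than just formally, I would differentiate in the distributional sense: a direct computation gives $\partial_x^2 e^{-|x|/c}=\frac{1}{c^2}e^{-|x|/c}-\frac{2}{c}\delta(x)$, and the particular coefficients $a$ and $-b$ in $\theta$ are precisely chosen so that the delta contributions, $-2\delta(x)$ from the $a$-term and $+2\delta(x)$ from the $b$-term, cancel exactly; the remaining smooth part reproduces~\eqref{EQ: ff kernel shape}. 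The only mildly subtle step is this cancellation of Dirac masses at the origin, which is the reason the Fourier computation (where no boundary issue arises) is the cleanest route to finding $\theta$ in the first place.
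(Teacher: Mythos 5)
Your proof is correct and follows essentially the same route as the paper: both construct $\theta$ by dividing $\hat W_{ff}$ by $-\lambda^2$ in the Fourier domain, where the $\lambda^2$ in the numerator of~\eqref{EQ: ff kernel fourier} cancels exactly. You go one step further by inverting the resulting rational function explicitly via partial fractions to obtain $\theta(x)=\tfrac{1}{2N(a,b)}\bigl(a\,e^{-|x|/a}-b\,e^{-|x|/b}\bigr)$ and then checking $\int_\R\theta\neq 0$, the fast decay, and $\theta''=W_{ff}$ (via the cancellation of the Dirac masses at the origin) directly, whereas the paper leaves $\theta$ as an implicit inverse Fourier transform and cites~\cite[Theorem~2.5]{mallat1999wavelet} for the decay; your explicit computation makes the second-derivative claim fully self-contained.
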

\begin{proof}
	Invoking~\eqref{EQ: ff zero average + normalized}, $W_{ff}$ is a wavelet by construction and it has fast decay because it decays exponentially as $x\to\pm\infty$. To show the second part of the statement, recall that for any $f\in\Ltwo(\R)\cap C^2(\R)$, such that $f''\in\Ltwo(\R)$, $\mathcal F(\theta'')=-\lambda^2\mathcal F(\theta)$, and observe that $W_{ff}=\theta''$ with $\theta=\mathcal F^{-1}\left(\frac{-1}{N(a,b)}\frac{(b^2-a^2)}{(1+a^2\lambda^2)(1+b^2\lambda^2)}\right)\,.$
	The fact that $\theta$ has the fast decay property follows by the fact that $\mathcal F(\theta)$ is smooth and~\cite[Theorem~2.5]{mallat1999wavelet}.
\end{proof}

\subsection{A wavelet zoom defined by the feedforward kernel}

The dictionary of space-frequency atoms generate by the wavelet is $W_{ff}$ is 
\begin{equation}\label{EQ: ff dictionary}
\mathcal D_{ff}=\left\{\psi^{ff}_{u,s}(x)=\frac{1}{\sqrt{s}}W_{ff}\left(\frac{x-u}{s}\right)\right\}_{u\in\R,s\in\R_+}\,.
\end{equation}
Invoking Proposition~\ref{PROP: Wff is wavelet} and~\cite[Theorem~6.2]{mallat1999wavelet}, the wavelet transform $\mathcal Wf(u,s)$ of a function $f\in\Ltwo(\R)$ through the space-frequency dictionary $\mathcal D_{ff}$ realizes a second-order {\it multi-scale differential operator}, in the sense that
\begin{equation}\label{EQ: multiscale differentiation}
\lim_{s\to0}\mathcal Wf(u,s)=K s^{5/2} f''(u),
\end{equation}
where $K=\mathcal F(\theta)(0)$. It follows that if $f$ is two-time differentiable in a neighborhood of $u$, then its wavelet transform at $u$ is $\mathcal O(s^{5/2})$, i.e., it is fast decreasing as a function of $s$. Conversely, if $f''(u)$ is unbounded or it is not defined, then the wavelet $\mathcal Wf(u,s)$ decreases at small scales less rapidly than $\mathcal O(s^{5/2})$. In other words, the multi-scale band-pass filtering performed by the wavelet transform associated to the mother wavelet $W_{ff}$
realizes a {\it zoom} into the signal structure by detecting singularities in its second order derivative.
This is a key principle of a {\it wavelet zoom}.

\subsection{The wavelet zoom is not robustly realizable in a feedforward model}

Despite its favorable multi-resolution properties, the feedforward wavelet zoom is not realizable in an analog hardware, being it artificial of biological, without unrealistic topographically precise (and thus not robust) tuning of neural connectivity. To see this, consider a neuron at position $x$ receiving feedforward inputs from two neurons at locations $y_1$ and $y_2$
A change in scale from $1$ to $s$ would imply changing the two associated synaptic strengths by
\[
\Delta W_1=\psi^{ff}_{y_1,s}-\psi^{ff}_{y_1,1}=\frac{1}{\sqrt{s}}W_{ff}\left(\frac{x-y_1}{s}\right)-W_{ff}(x-y_1)\,,
\]
\[
\Delta W_2=\psi^{ff}_{y_2,s}-\psi^{ff}_{y_2,1}=\frac{1}{\sqrt{s}}W_{ff}\left(\frac{x-y_2}{s}\right)-W_{ff}(x-y_2)\,,
\]
respectively. Because the kernel $W_{ff}$ is nonlinear, in general
$\frac{\Delta W_1}{W_{ff}(x-y_1)}\neq\frac{\Delta W_2}{W_{ff}(x-y_2)},$
i.e., the two gains must be scaled by different amounts. In particular at small $s$, this difference can be large even when $y_1$ and $y_2$ are close. This means that changing the scale in a feedforward fashion requires a scale- {\it and} position-dependent finely-tuned scaling of all synaptic gains. There are no biological evidence of such a precise tuning of synaptic gains and the same tuning would be unreliable in sloppy analog hardware.

\subsection{Feedback design of the wavelet zoom}

In this section we show that it is possible to realize the space-frequency dictionary $\mathcal D_{ff}$, defined in~\eqref{EQ: ff dictionary}, through the {\it space-homogeneous} scaling of the damping coefficient $\gamma$, the excitatory connection gain $K_E$, and the inhibitory connection gain $K_I$ in model~\eqref{EQ: V1 neural field generic fourier}, for suitably designed excitatory and inhibitory feedback kernels. To this end, let
\begin{equation}\label{EQ: iso connection ker exc}
W_E(x-y)=\frac{1}{2\alpha}e^{-\frac{|x-y|}{\alpha}}
\end{equation}
and
\begin{equation}\label{EQ: iso connection ker inh}
W_I(x-y)=\frac{1}{2\beta}e^{-\frac{|x-y|}{\beta}}
\end{equation}
with $0<\alpha<\beta$, which is an exponential approximation of the local-excitation/lateral-inhibition feedback motif of spatially extended neural systems~\cite{Wilson1973}. In the spatial frequency domain and for static visual input $h(x,t)\equiv l(x)$, $l\in\Ltwo(\R)$, the resulting neural dynamics~\eqref{EQ: V1 neural field generic fourier} read
\begin{equation}\label{EQ: iso EI neural field fourier}
\hat a_t=\left(-\gamma+\frac{K_E}{1+\alpha^2\lambda^2}-\frac{K_I}{1+\beta^2\lambda^2}\right)\hat a + \hat W_{ff}\hat l\,.
\end{equation}
\begin{lemma}\label{LEM: iso stability}
Dynamics~\eqref{EQ: iso EI neural field fourier} are exponentially stable (in the sense of~\cite[Definition~3]{Bamieh2002}) provided $\gamma>0$ and
$\frac{K_E}{K_I}\frac{\beta^2}{\alpha^2}\leq 1.$
\end{lemma}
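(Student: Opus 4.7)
The plan is to exploit the fact that~\eqref{EQ: iso EI neural field fourier} is diagonal in the Fourier basis: each mode $\hat a(\lambda,t)$ evolves as an independent scalar linear ODE with rate
\[
g(\lambda) := -\gamma + \frac{K_E}{1+\alpha^2\lambda^2} - \frac{K_I}{1+\beta^2\lambda^2},
\]
driven by $\hat W_{ff}(\lambda)\hat l(\lambda)$. As in~\cite{Bamieh2002}, exponential stability of the spatially-invariant system is equivalent to a uniform bound $\sup_{\lambda\in\R}g(\lambda)<0$, so the whole argument reduces to a scalar estimate on $g$.

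Next I would set $\mu:=\lambda^2\geq 0$ and combine the two rational feedback terms over a common denominator, writing
\[
\frac{K_E}{1+\alpha^2\mu}-\frac{K_I}{1+\beta^2\mu}=\frac{(K_E-K_I)+(K_E\beta^2-K_I\alpha^2)\,\mu}{(1+\alpha^2\mu)(1+\beta^2\mu)}.
\]
The hypothesis rearranges as $K_E\beta^2-K_I\alpha^2\leq 0$, which kills the sign of the coefficient of $\mu$ in the numerator. Combined with $\alpha<\beta$ it also forces $K_E\leq K_I(\alpha/\beta)^2<K_I$, so the constant term $K_E-K_I$ is strictly negative. Both numerator coefficients are therefore $\leq 0$, the constant one strictly, while the denominator is positive for every $\mu\geq 0$. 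Thus the feedback contribution is bounded above by $(K_E-K_I)/((1+\alpha^2\mu)(1+\beta^2\mu))<0$ for all $\mu\geq 0$.

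I would then conclude $g(\lambda)\leq -\gamma<0$ uniformly in $\lambda\in\R$, which by~\cite{Bamieh2002} yields exponential stability of~\eqref{EQ: iso EI neural field fourier}. There is no genuine obstacle: the only substantive step is the algebraic one of reading the hypothesis off as the sign condition on the two numerator coefficients of the combined rational expression. A more calculus-heavy route through $\frac{d}{d\mu}$ and the critical point of the feedback term would also work and may be more illuminating structurally, but it gives the same conclusion with more bookkeeping; the common-denominator trick is the cleanest route I see.
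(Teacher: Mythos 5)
Your proof is correct and follows essentially the same route as the paper's: reduce to a uniform negativity bound on the scalar Fourier symbol via the Bamieh criterion, then verify that bound from the hypothesis $K_E\beta^2\leq K_I\alpha^2$ by elementary algebra. The only cosmetic difference is that you clear denominators and check the signs of the two numerator coefficients, whereas the paper bounds the ratio $\tfrac{1+\beta^2\lambda^2}{1+\alpha^2\lambda^2}$ by $\tfrac{\beta^2}{\alpha^2}$; both yield the same uniform bound $g(\lambda)\leq-\gamma$.
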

\begin{proof}
	\eqref{EQ: iso EI neural field fourier} is exponentially stable provided~\cite[Theorem~1]{Bamieh2002} that $\gamma-\frac{K_E}{1+\alpha^2\lambda^2}+\frac{K_I}{1+\beta^2\lambda^2}\geq C >0$ for all $\lambda\in\R$. This evidently holds with $C=\gamma$ if
$\frac{K_E}{K_I}\frac{1+\beta^2\lambda^2}{1+\alpha^2\lambda^2}\leq 1$ for all $\lambda\in\R$.
Recalling that $0<\alpha<\beta$, we have that $
1\leq\frac{1+\beta^2\lambda^2}{1+\alpha^2\lambda^2}<\frac{\beta^2}{\alpha^2}$  for all $\lambda\in\R$
and the result follows.
\end{proof}
The next theorem provides sufficient conditions for the existence of and explicitly constructs the space-homogeneous, scale-dependent tuning of damping coefficient, and excitatory and inhibitory feedback gains to realize the space-frequency dictionary $\mathcal D_{ff}$ as the spatial transfer function~\eqref{EQ: neural field TF} of model~\eqref{EQ: iso EI neural field fourier}.

\begin{theorem}\label{THM: iso realization}
Let $W_{ff}$, $W_E$, $W_I$ be defined as in~\eqref{EQ: ff kernel shape},~\eqref{EQ: iso connection ker exc}, and~\eqref{EQ: iso connection ker inh}, respectively, with $a=\alpha$ and $b=\beta$. Let $\rho(s)=s^{3/2}$,
\begin{align*}
\kappa_e(s)&=\frac{\alpha^2(s^{-5/2}-s^{-1/2})-\beta^2(s^{-1/2}-s^{3/2})}{\beta^2-\alpha^2}\,,\\
\kappa_i(s)&=\frac{\beta^2(s^{-5/2}-s^{-1/2})-\alpha^2(s^{-1/2}-s^{3/2})}{\beta^2-\alpha^2}\,.
\end{align*}
The spatial transfer function~\eqref{EQ: neural field TF} satisfies
\[
H_{\rho(s),k_e(s),k_i(s)}(\lambda)=\hat\psi^{ff}_s(\lambda)
\]
where $\hat\psi^{ff}_s(\lambda)=\mathcal F(\psi^{ff}_{0,s})(\lambda)$. Furthermore, dynamics~\eqref{EQ: iso EI neural field fourier} with $\gamma=\rho(s)$, $K_E=\kappa_e(s)$, $K_I=\kappa_i(s)$ are exponentially stable for all $0<s\leq 1$.
\end{theorem}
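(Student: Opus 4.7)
The plan is to verify both claims by a direct Fourier-domain computation that exploits the rational structure of $\hat W_{ff}$, $\hat W_E$ and $\hat W_I$. I would first reduce the target identity $\hat H_{\rho(s),\kappa_e(s),\kappa_i(s)}(\lambda)=\hat\psi^{ff}_s(\lambda)$ to an equation on the denominator of $\hat H$. By the Fourier scaling rule, $\hat\psi^{ff}_s(\lambda)=\sqrt{s}\,\hat W_{ff}(s\lambda)$, so the claim is equivalent to
\[
\gamma-K_E\hat W_E(\lambda)+K_I\hat W_I(\lambda)=\frac{\hat W_{ff}(\lambda)}{\sqrt{s}\,\hat W_{ff}(s\lambda)}.
\]
Plugging \eqref{EQ: ff kernel fourier} with $a=\alpha$, $b=\beta$ into the right-hand side cancels the normalization constant $N(\alpha,\beta)$ and the factor $(\beta^2-\alpha^2)\lambda^2$, leaving $s^{-5/2}(1+\alpha^2s^2\lambda^2)(1+\beta^2s^2\lambda^2)/[(1+\alpha^2\lambda^2)(1+\beta^2\lambda^2)]$. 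Placing the left-hand side over the common denominator $(1+\alpha^2\lambda^2)(1+\beta^2\lambda^2)$ reduces everything to a polynomial identity in $\mu:=\lambda^2$ of degree $2$.

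Next I would match the coefficients of $\mu^0,\mu^1,\mu^2$. The $\mu^2$ coefficient yields $\gamma\alpha^2\beta^2=\alpha^2\beta^2 s^{3/2}$, forcing $\gamma=\rho(s)=s^{3/2}$. With $\gamma$ fixed, the $\mu^0$ and $\mu^1$ coefficients form a $2\times 2$ linear system in $(K_E,K_I)$ whose matrix is $\left(\begin{smallmatrix}-1 & 1\\ -\beta^2 & \alpha^2\end{smallmatrix}\right)$, with nonzero determinant $\beta^2-\alpha^2$; its unique solution is exactly $(\kappa_e(s),\kappa_i(s))$ as given. This is the only calculation of length and is entirely routine.

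For stability, Lemma~\ref{LEM: iso stability} cannot be invoked mechanically, because $\kappa_e(s)$ changes sign (it is negative for $s$ slightly below $1$ and vanishes at $s=1$), so the ratio $K_E/K_I$ used there becomes ill-defined. Instead I would exploit the closed form derived in the first step: the denominator of $\hat H$ equals $s^{-5/2}(1+\alpha^2s^2\lambda^2)(1+\beta^2s^2\lambda^2)/[(1+\alpha^2\lambda^2)(1+\beta^2\lambda^2)]$. For $0<s\le 1$, the elementary inequality $(1+\alpha^2s^2\lambda^2)(1+\beta^2s^2\lambda^2)\ge s^4(1+\alpha^2\lambda^2)(1+\beta^2\lambda^2)$ holds, as its difference expands to $(1-s^4)+(\alpha^2+\beta^2)s^2(1-s^2)\lambda^2\ge 0$. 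Consequently $\gamma-K_E\hat W_E(\lambda)+K_I\hat W_I(\lambda)\ge s^{3/2}>0$ uniformly in $\lambda$, and exponential stability follows from \cite[Theorem~1]{Bamieh2002} exactly as in the proof of Lemma~\ref{LEM: iso stability}. The main obstacle is resisting the temptation to invoke Lemma~\ref{LEM: iso stability} directly; once the closed form of the denominator is written down, the uniform lower bound is transparent.
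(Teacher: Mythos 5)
Your proof is correct. The realization half is essentially the paper's own argument: both reduce the identity to matching the coefficients of $1$, $\lambda^2$, $\lambda^4$ in the denominator of \eqref{EQ: neural field TF} against those of $s^{-5/2}(1+\alpha^2s^2\lambda^2)(1+\beta^2s^2\lambda^2)$; the only cosmetic difference is that you solve the resulting triangular system explicitly ($\gamma$ from the $\lambda^4$ coefficient, then a $2\times2$ system for $(K_E,K_I)$), whereas the paper just exhibits the solution. The stability half is where you genuinely diverge. The paper \emph{does} invoke Lemma~\ref{LEM: iso stability}, verifying $\kappa(s)=\kappa_e(s)/\kappa_i(s)\leq\alpha^2/\beta^2$ on $(0,1)$ via the limits at $s\to0$, $s\to1$ and monotonicity of $\kappa$. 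Your objection that the lemma is inapplicable is somewhat overstated: one checks $\kappa_i(s)=\frac{(1-s^2)s^{-1/2}(\beta^2s^{-2}-\alpha^2)}{\beta^2-\alpha^2}>0$ on $(0,1)$, so the ratio is well-defined there, and on $(\alpha/\beta,1)$, where $\kappa_e(s)<0$ as you observe, the hypothesis $\frac{K_E}{K_I}\frac{\beta^2}{\alpha^2}\leq 1$ holds trivially; the only true degeneracy is at $s=1$, where both gains vanish and stability reduces to $\gamma=1>0$. You are right, however, that the lemma's proof tacitly uses sign information (division by $K_I>0$, and $K_E/K_I\geq0$ in the final majorization), so a careful application requires exactly the case distinction you flag, which the paper glosses over. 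Your replacement---writing the realized denominator in closed form as $s^{-5/2}(1+\alpha^2s^2\lambda^2)(1+\beta^2s^2\lambda^2)/[(1+\alpha^2\lambda^2)(1+\beta^2\lambda^2)]$ and bounding it below by $s^{3/2}$ through the elementary polynomial inequality you expand---is cleaner: it produces the uniform constant $C=s^{3/2}=\gamma$ directly, covers $s=1$ without a separate case, and bypasses the ratio/monotonicity analysis and its sign subtleties altogether. (As a side remark relevant to the paper's route: the stated value of $\lim_{s\to1}\kappa(s)$ simplifies to $-\alpha^2/\beta^2$, while the cancellation above shows the ratio actually tends to $-1$; either value is below $\alpha^2/\beta^2$, so the paper's conclusion is unaffected.)
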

\begin{proof}
Start by observing that, for $a=\alpha$, $b=\beta$,
\begin{align*}
\hat\psi^{ff}_s(\lambda)&=\frac{(N(\alpha,\beta))^{-1}(\beta^2-\alpha^2)\lambda^2}{s^{-5/2}+(\alpha^2+\beta^2)s^{-1/2}\lambda^2+\alpha^2\beta^2s^{3/2}\lambda^4}\,.
\end{align*}
On the other hand,\\
$H_{\gamma,K_E,K_I}(\lambda)=(N(\alpha,\beta))^{-1}(\beta^2-\alpha^2)\lambda^2\Big(\gamma-K_E+K_I+
(\gamma(\alpha^2+\beta^2)
-K_E\beta^2+K_I\alpha^2)\lambda^2+\gamma\alpha^2\beta^2\lambda^4\Big)^{-1}$.\\
Equating the monomial coefficients in the denominators leads to the system of equations
\begin{align*}
\gamma-K_E+K_I&=s^{-5/2}\\
\gamma(\alpha^2+\beta^2)-K_E\beta^2+K_I\alpha^2&=(\alpha^2+\beta^2)s^{-1/2}\\
\gamma\alpha^2\beta^2&=a^2\beta^2s^{3/2}\,
\end{align*}
whose solution is $\gamma=\rho(s)$, $K_E=\kappa_e(s)$, and $K_I=\kappa_i(s)$. To show stability of the resulting spatio-temporal dynamics, observing that $\rho(s)>0$ and invoking
Lemma~\ref{LEM: iso stability}, it remains to show that
\begin{equation}\label{EQ: stability proof}
\kappa(s):=\frac{\kappa_e(s)}{\kappa_i(s)}\leq\frac{\alpha^2}{\beta^2}
\end{equation}
for all $0<s\leq 1$. The rational function $\kappa(s)$ is smooth on $(0,1)$, indeed, its denominator vanishes for $|s|=1$, $|s|=\frac{\beta}{\alpha}>1$, and
\begin{equation}\label{EQ: gain limits}
\lim_{s\to 0}\kappa(s)=\frac{\alpha^2}{\beta^2},\quad \lim_{s\to 1}\kappa(s)=\frac{\beta^2-\alpha^2}{\beta^2}\frac{\alpha^2}{\alpha^2-\beta^2}\,.
\end{equation}
Furthermore, it is easy to verify that $k'(s)<0$ for $s\in(0,1)$ and~\eqref{EQ: stability proof} follows.
\end{proof}

\subsection{The design is robust to approximations and uncertainties}

\begin{figure*}[t!]
	\centering
	\includegraphics[width=0.75\textwidth]{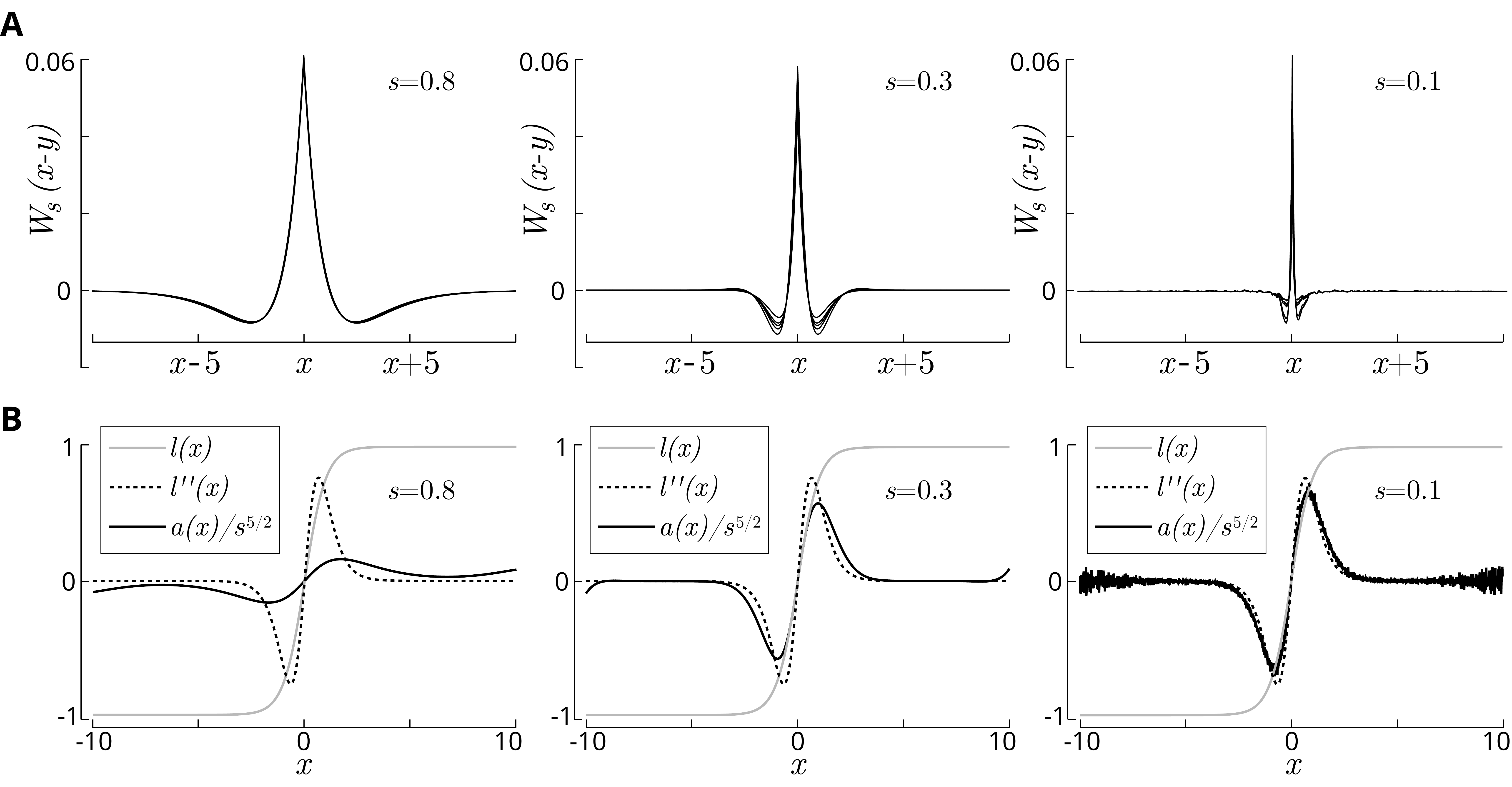}
	\caption{A. Closed-loop kernel $W_s(x-y)=\mathcal F^{-1}(H_{\rho(s),k_e(s),k_i(s)})(x-y)$ with $\alpha=1$ and $\beta=2$ at three different scales and under random perturbations as specified in the text. B. Closed-loop filter spatial response at threes scales and under random perturbations.}\label{FIG: real robu}
\end{figure*}

Our wavelet-zoom feedback design is naturally robust to approximation and uncertainties that might arise in practical applications. We illustrate this fact through a numerical example. For instance, it might be difficult to jointly tune the two feedback gains exactly accordingly the complicated functions $\kappa_e(s)$ and $\kappa_i(s)$ defined in Theorem~\ref{THM: iso realization}. However, as show in~\eqref{EQ: gain limits}, at small scales, the two feedback gains can be scaled proportionally, i.e.
$
\kappa_e(s)\approx\frac{\alpha^2}{\beta^2}\kappa_i(s)\,.
$
In our numerical realization we thus approximate
\[
\kappa_e(s)=\delta\frac{\alpha^2}{\beta^2}\kappa_i(s),\quad 0<\delta\lesssim 1\,.
\]
The factor $\delta$ enforces that the stability condition derived in Lemma~\ref{LEM: iso stability} is robustly satisfied. We also add global small ($10^{-2}$ of the nominal values) random uncertainties to the spatial spread of the various kernels and to the scale parameter $s$ passed to the three functions $\rho(s),\kappa_e(s),\kappa_i(s)$. Finally, we add local perturbations to all local coupling gain, i.e., in our numerical discretization on the one-dimensional mesh $\{x_1,\ldots,x_{Nspace}\}$, we perturb $W_E(x_i-x_j)$ to $W_E(x_i-y_j)(1+\varepsilon p_{ij})$, where $p_{ij}$ is drawn from a normal distribution and $\varepsilon=10^{-4}$.
Each panel of Figure~\ref{FIG: real robu}A shows five realizations of the resulting closed-loop kernel
$
W_s(x-y)=\mathcal F^{-1}(H_{\rho(s),k_e(s),k_i(s)})(x-y)
$,
under random perturbations as specified above and at three different scales $s=0.8,0.3,0.1$. The effect of perturbations becomes noticeable at small scales, but the realization is robust. Future work will aim at quantify precise robustness bounds. Each panel of Figure~\ref{FIG: real robu}B shows the input-output performance of the perturbed closed-loop filter at the same scales. The gray trace shows the input signal ($l(x)=\tanh(x)$ in this example). The black dashed trace shows the input signal second derivative. The black trace is the filter response $a(x)$ scaled by $s^{-5/2}$. Despite perturbations becoming evident at small scales, the scaled filter response robustly converge to the signal's second derivative as predicted by wavelet-zoom theory~\eqref{EQ: multiscale differentiation}.

\section{Discussion and extensions}
\label{SEC: discussion}

\subsection{Realizing spatial wavelet zooms via feedback interconnection of simple spatial transfer functions and gain modulation}

Theorem~\ref{THM: iso realization} shows that it is possible to realize a {\it whole} continuous space-frequency atom dictionary on a fixed kernel interconnection topology, that is, without reshaping the feedforward and feedback kernels, but solely modulating the feedback gains, homogeneously in space, and robustly to uncertainties and spatial heterogeneities. How can this theorem be used in practice? The key idea is that the feedforward and feedback kernels can be hard-wired once and for all (e.g., in an analog DNF architecture~\cite{indiveri2019importance}). Only their gains need to be modulated and this modulation does not need to be fine-tuned in space. A space-homogeneous balanced up-scaling of excitatory and inhibitory interconnection gains suffices to change the closed-loop filter resolution.

\subsection{Realizing richer space-frequency dictionaries}

Although focused on a specific feedforward kernel and associated space-frequency atom dictionary, the spatial frequency-domain closed-loop design methodology spelled out in this paper is general. Namely, any kernel with a rational transfer function is a good candidate to realize its associated space-frequency dictionary by spatial loop-shaping. For instance, the unisotropic case
$
\hat W_{ff}(\lambda)=\frac{i\lambda}{(1+\alpha^2\lambda^2)(1+\beta^2\lambda^2)}
$
can be treated along exactly the same lines of the isotropic case considered here. Richer (e.g., Gabor-like) kernels and associated dictionaries can also be realized taking inspiration from the multi-layer structure of the primary visual cortex~\cite{alonso2002}. The primary visual cortex is indeed composed of multiple feedforward, feedback, and horizontal pathways, of which the ones considered in Section~\ref{SEC: V1 feedback filter} are just the most basic approximation.


\subsection{The role of nonlinearities}

Biological neural dynamics are intrinsically nonlinear, both in time and space. The role of nonlinearities is to transform linear instability into multi-scale excitability and pattern formation~\cite{Franci2016,Drion2018a}. These all-or-none responses robustify information processing by providing neuronal communication with a mixed analog/digital nature~\cite{Drion2018,Sepulchre2018}. It will be crucial to understand how the linear information processing level studied here interacts with the nonlinear nature of neural dynamics, as proposed in~\cite{franci2019sensitivity} for the purely temporal case.

\subsection{Time-varying visual stimuli}

Actual visual stimuli are time-varying. A fundamental extension of the ideas proposed here will be to build time-and-space-invariant systems that realizes space-time-frequency dictionaries capable of performing a wavelet zoom on incoming signal both in time and space. The design procedure could approximate the system response via a separable transfer function, much in the same way as~\cite{Gorinevsky2008}, which would allow to independently design the temporal and spatial responses by joining the theory developed here with classical works on analog temporal wavelet realization such as~\cite{haddad2005log}.

\bibliographystyle{IEEEtran}
\bibliography{CDCwavelet}

\end{document}